\documentclass[reqno]{amsart}

\usepackage[T1]{fontenc}
\usepackage[utf8]{inputenc}
\usepackage{amsmath,amssymb,amsthm}
\usepackage{microtype}
\usepackage[hidelinks]{hyperref}

\hypersetup{
  pdftitle={A direct proof of the locally dense graphon inequality},
  pdfauthor={Dean Menezes},
  pdfsubject={2020 Mathematics Subject Classification: Primary 05C35; Secondary 05C80, 28A20},
  pdfkeywords={graphons, locally dense graphs, copositive kernels, randomized rounding, atomless probability spaces}
}

\theoremstyle{plain}
\newtheorem{theorem}{Theorem}
\newtheorem{proposition}[theorem]{Proposition}
\newtheorem{lemma}[theorem]{Lemma}
\newtheorem{corollary}[theorem]{Corollary}
\theoremstyle{remark}
\newtheorem{remark}[theorem]{Remark}

\newcommand{\one}{\mathbf 1}
\newcommand{\E}{\mathbb E}
\newcommand{\N}{\mathbb N}
\newcommand{\dd}{\,\mathrm d}
\newcommand{\norminf}[1]{\lVert #1\rVert_\infty}
\newcommand{\normone}[1]{\lVert #1\rVert_1}

\begin{document}

\title[A direct proof of the locally dense graphon inequality]
      {A direct proof of the locally dense graphon inequality}

\author{Dean Menezes}
\address{The University of Texas at Austin, Austin, Texas 78712, USA}
\email{dean.menezes@utexas.edu}
\subjclass[2020]{Primary 05C35; Secondary 05C80, 28A20}
\keywords{graphons, locally dense graphs, copositive kernels, randomized rounding,
atomless probability spaces}
\date{August 13, 2026}

\begin{abstract}
Brada\v{c}, Sudakov, and Wigderson characterized $p$-locally dense graphons by
a quadratic inequality for all bounded nonnegative functions. Their proof uses
Reiher's finite lemma and graphon approximation, and they asked for a direct
proof. We give one by rounding simple functions. Divide each level set into
$m$ equal-measure pieces and retain each piece independently with probability
equal to its level. Off-diagonal terms agree in expectation; the diagonal
error is at most $\norminf{W-p}/(4m)$. Letting $m\to\infty$ and then
approximating in $L^1$ proves the inequality. A three-atom example shows that
atomlessness is necessary if arbitrary probability spaces are allowed.
\end{abstract}

\maketitle

\section{Introduction}

Let $(\Omega,\Sigma,\mu)$ be a probability space. A symmetric measurable
kernel $W\colon\Omega^2\to[0,1]$ is \emph{$p$-locally dense} if
\begin{equation}\label{eq:local-density}
  \iint_{U\times U}W(x,y)\dd\mu(x)\dd\mu(y)
  \geq p\,\mu(U)^2
  \qquad(U\in\Sigma).
\end{equation}
This is the graphon analogue of local density in finite graphs; see
\cite{Lovasz}. Brada\v{c}, Sudakov, and Wigderson
\cite[Lemma~2.8]{BSW} proved that, on the usual atomless graphon space,
\eqref{eq:local-density} is equivalent to
\begin{equation}\label{eq:fractional}
  \iint_{\Omega^2}f(x)W(x,y)f(y)\dd\mu(x)\dd\mu(y)
  \geq p\,\normone{f}^{2}
\end{equation}
for every bounded measurable $f\colon\Omega\to[0,\infty)$. Taking
$f=\one_U$ proves one implication. Their proof of the converse passes through
a finite lemma of Reiher \cite{Reiher} and graphon approximation. They asked
for a proof that works directly with $W$.

\begin{theorem}\label{thm:main}
Let $(\Omega,\Sigma,\mu)$ be an atomless probability space, let $p\in[0,1]$,
and let $W\colon\Omega^2\to[0,1]$ be symmetric and measurable. If
\eqref{eq:local-density} holds, then \eqref{eq:fractional} holds for every
bounded measurable $f\colon\Omega\to[0,\infty)$.
\end{theorem}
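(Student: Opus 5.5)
We follow the strategy announced in the abstract: randomized rounding of simple functions, followed by an approximation step. The first reduction is to homogeneity and simple functions. Both sides of \eqref{eq:fractional} are homogeneous of degree two in $f$, so we may assume $0\le f\le 1$. Write $Q(f)=\iint fWf$. Since $W$ is bounded, $|Q(f)-Q(g)|\le \normone{f-g}\,(\norminf{f}+\norminf{g})$. Both sides of \eqref{eq:fractional} are therefore continuous in $L^1$ on $\{0\le f\le 1\}$. Simple functions with values in $[0,1]$ are dense there, so it suffices to prove \eqref{eq:fractional} for $f=\sum_{i=1}^k a_i\one_{A_i}$, where $A_1,\dots,A_k$ are disjoint and $a_i\in[0,1]$.

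Next comes the rounding construction. Fix $m$. Because $\mu$ is atomless, Sierpi\'nski's theorem (intermediate values of atomless measures) lets us split each $A_i$ into $m$ disjoint pieces $A_{i,1},\dots,A_{i,m}$, each of measure $\mu(A_i)/m$. Take independent Bernoulli variables $\xi_{ij}$ with $\Pr(\xi_{ij}=1)=a_i$, and let $U=\bigcup\{A_{ij}:\xi_{ij}=1\}$. Then $\one_U=\sum_{i,j}\xi_{ij}\one_{A_{ij}}$, and \eqref{eq:local-density} gives $Q(\one_U)\ge p\,\mu(U)^2$ for every outcome, so
\[
\E\, Q(\one_U)\ \ge\ p\,\E\,\mu(U)^2 .
\]
Expand both sides over pairs of pieces $(A_{ij},A_{i'j'})$. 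For distinct pieces independence gives $\E\,\xi_{ij}\xi_{i'j'}=a_ia_{i'}$, which is exactly the coefficient of that pair in $Q(f)$ and in $\normone{f}^2$. For a piece paired with itself, $\E\,\xi_{ij}^2=a_i$ rather than $a_i^2$. Hence
\[
\E\,Q(\one_U)-p\,\E\,\mu(U)^2 = Q(f)-p\normone{f}^2+\sum_{i,j}(a_i-a_i^2)\iint_{A_{ij}^2}(W-p).
\]

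The remaining work is to show the diagonal error tends to zero; this is the only real obstacle, and the bound is elementary. Each diagonal term has absolute value at most $a_i(1-a_i)\norminf{W-p}\,\mu(A_{ij})^2\le \tfrac14\norminf{W-p}\,\mu(A_i)^2/m^2$. Summing over the $m$ pieces of each $A_i$ and then over $i$ gives a total of at most
\[
\frac{\norminf{W-p}}{4m}\sum_i\mu(A_i)^2\le \frac{\norminf{W-p}}{4m}.
\]
Therefore $Q(f)-p\normone{f}^2\ge -\norminf{W-p}/(4m)$ for every $m$. Letting $m\to\infty$ proves \eqref{eq:fractional} for simple $f$, and the $L^1$ density argument of the first step extends it to all bounded nonnegative measurable $f$.

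The step that uses atomlessness is the equipartition of each $A_i$ into $m$ pieces of measure $\mu(A_i)/m$. Without it the diagonal terms cannot be made small.
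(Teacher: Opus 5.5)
Your proposal is correct and follows the paper's proof essentially step for step. It uses the same equal-measure splitting of each level set into $m$ pieces, the same independent retention with probability $a_i$, the same diagonal error bound $\norminf{W-p}/(4m)$, and an $L^1$ approximation by simple functions; the differences are cosmetic (you cite Sierpi\'nski's theorem rather than proving the splitting lemma, and you expand $Q(\one_U)$ and $p\,\mu(U)^2$ separately instead of packaging them into the kernel $K=W-p$).
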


The proof is an averaging argument. Write
$f=\sum_i a_i\one_{A_i}$ and divide each $A_i$ into $m$ equal parts. Retain
each part independently with probability $a_i$. Off-diagonal interactions
then have the correct expectation, while the total diagonal error is at most
$\norminf{W-p}/(4m)$. Local density controls every rounded indicator; letting
$m\to\infty$ proves the simple case, and an $L^1$ estimate proves the rest.

\section{Random rounding}

For a bounded symmetric measurable kernel $K$ on $\Omega^2$, put
\begin{equation}\label{eq:q-def}
  q(f):=\iint_{\Omega^2}f(x)K(x,y)f(y)\dd\mu(x)\dd\mu(y).
\end{equation}
For measurable $B,C\subseteq\Omega$, abbreviate
\begin{equation}\label{eq:KBC}
  K(B,C):=\iint_{B\times C}K(x,y)\dd\mu(x)\dd\mu(y).
\end{equation}
Thus, for disjoint measurable sets $B_i$,
\begin{equation}\label{eq:block-form}
  q\!\left(\sum_i c_i\one_{B_i}\right)
  =\sum_{i,j}c_i c_j K(B_i,B_j).
\end{equation}

Set $K=W-p$. Then
\begin{equation}\label{eq:K-bound}
  \norminf{K}\leq\max\{p,1-p\}\leq1
\end{equation}
and
\begin{equation}\label{eq:reduction}
  q(f)=\iint f(x)W(x,y)f(y)\dd\mu(x)\dd\mu(y)
       -p\,\normone{f}^{2}.
\end{equation}
Consequently, \eqref{eq:local-density} says that $q(\one_U)\geq0$ for every
$U\in\Sigma$. By homogeneity, Theorem~\ref{thm:main} reduces to the next
statement.

\begin{theorem}[Kernel lemma]\label{thm:kernel}
Let $(\Omega,\Sigma,\mu)$ be atomless, and let
$K\in L^\infty(\Omega^2)$ be symmetric. If
\[
  q(\one_U)\geq0\qquad(U\in\Sigma),
\]
then $q(f)\geq0$ for every measurable $f\colon\Omega\to[0,1]$.
\end{theorem}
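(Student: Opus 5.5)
We follow the randomized rounding outlined in the introduction, first for simple functions and then by approximation. Let $f=\sum_{i=1}^n a_i\one_{A_i}$ with $a_i\in[0,1]$ and the $A_i$ disjoint measurable sets. Fix $m\in\N$. Because $\mu$ is atomless, Sierpi\'nski's intermediate value property for atomless measures lets us split each $A_i$ into disjoint measurable pieces $A_{i,1},\dots,A_{i,m}$, each of measure $\mu(A_i)/m$. Let $\xi_{i,k}$ be independent Bernoulli variables with $\Pr(\xi_{i,k}=1)=a_i$. Define the random set $U=\bigcup\{A_{i,k}:\xi_{i,k}=1\}$. By hypothesis $q(\one_U)\ge0$ for every realization, so $\E\,q(\one_U)\ge0$.

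We then expand this expectation with \eqref{eq:block-form}:
\[
  \E\,q(\one_U)=\sum_{(i,k)\neq(j,l)}a_ia_jK(A_{i,k},A_{j,l})+\sum_{i,k}a_iK(A_{i,k},A_{i,k}).
\]
The full sum $\sum_{(i,k),(j,l)}a_ia_jK(A_{i,k},A_{j,l})$ equals $q(f)$. Hence
\[
  \E\,q(\one_U)=q(f)+\sum_{i,k}(a_i-a_i^2)K(A_{i,k},A_{i,k}).
\]
Each diagonal term satisfies $|K(A_{i,k},A_{i,k})|\le\norminf{K}\mu(A_i)^2/m^2$, and $a_i-a_i^2\le1/4$. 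The error is therefore at most
\[
  \frac{\norminf{K}}{4m^2}\sum_i m\,\mu(A_i)^2\le\frac{\norminf{K}}{4m},
\]
using $\sum_i\mu(A_i)^2\le\sum_i\mu(A_i)\le1$. This gives $q(f)\ge-\norminf{K}/(4m)$, and letting $m\to\infty$ yields $q(f)\ge0$ for every simple $f$ with values in $[0,1]$.

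For a general measurable $f\colon\Omega\to[0,1]$, choose simple $f_n$ with values in $[0,1]$ and $\normone{f_n-f}\to0$, for instance by rounding $f$ down to a grid of mesh $1/n$. By symmetry of $K$,
\[
  q(f)-q(f_n)=\iint (f-f_n)(x)K(x,y)(f+f_n)(y)\dd\mu(x)\dd\mu(y).
\]
Since $0\le f+f_n\le2$, this gives $|q(f)-q(f_n)|\le2\norminf{K}\normone{f-f_n}\to0$. Hence $q(f)\ge0$.

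The only delicate point is the bookkeeping in the expectation. Independence is used only to get $\E[\xi_{i,k}\xi_{j,l}]=a_ia_j$ for distinct pieces. The diagonal correction has to be isolated exactly, and the equal-measure splitting is what makes it $O(1/m)$. Atomlessness enters only through that splitting.
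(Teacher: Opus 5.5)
Your proof is correct and follows the paper's argument essentially step for step. Like the paper, you split each level set into equal-measure pieces, retain the pieces by independent Bernoulli choices, isolate the exact diagonal correction $\sum_{i,k}a_i(1-a_i)K(A_{i,k},A_{i,k})$ with the bound $\norminf{K}/(4m)$, and finish with an $L^1$ approximation by grid-rounded simple functions. The differences are cosmetic: you cite Sierpi\'nski's theorem instead of proving the splitting lemma, and you write $q(f)-q(f_n)$ as one bilinear term against $f+f_n$ using symmetry of $K$, whereas the paper uses two bilinear terms and does not need symmetry there.
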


We need the intermediate-value property of atomless measures. The short proof
is included; see also \cite[\S215]{Fremlin}.

\begin{lemma}[Equal-measure splitting]\label{lem:splitting}
If $(\Omega,\Sigma,\mu)$ is atomless, then every $A\in\Sigma$ and every
$m\in\N$ have a measurable partition
\[
  A=A_1\sqcup\cdots\sqcup A_m,
  \qquad
  \mu(A_s)=\frac{\mu(A)}m\quad(1\leq s\leq m).
\]
\end{lemma}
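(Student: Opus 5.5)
The plan is to prove an intermediate-value property first, then peel off pieces one at a time. Concretely, I would first show that for every $B\in\Sigma$ and every $t\in[0,\mu(B)]$ there is a measurable $C\subseteq B$ with $\mu(C)=t$. The equal-measure partition then comes by induction on $m$. The case $m=1$ is trivial. For the inductive step, choose $A_1\subseteq A$ with $\mu(A_1)=\mu(A)/m$. Then apply the inductive hypothesis to $A\setminus A_1$, which has measure $(m-1)\mu(A)/m$, to split it into $m-1$ parts of measure $\mu(A)/m$ each.

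For the intermediate-value property I would use the standard greedy exhaustion argument. The first ingredient is that in an atomless space every set of positive measure contains subsets of arbitrarily small positive measure. Indeed, if $\mu(B)>0$, atomlessness gives $D\subseteq B$ with $0<\mu(D)<\mu(B)$. One of $D$ and $B\setminus D$ has measure in $(0,\mu(B)/2]$. Iterating this gives subsets of measure at most $\mu(B)/2^k$, still positive.

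Next, I would build $C$ greedily. Start with $C_0=\emptyset$. Given $C_n\subseteq B$ with $\mu(C_n)\le t$, let
\[
  s_n:=\sup\{\mu(D): D\subseteq B\setminus C_n,\ \mu(C_n)+\mu(D)\le t\}.
\]
Pick an admissible $D_n$ with $\mu(D_n)\ge s_n/2$, and set $C_{n+1}=C_n\sqcup D_n$. Finally put $C=\bigcup_n C_n$. Then $\mu(C)\le t$ by countable additivity. Moreover $\sum_n\mu(D_n)\le 1$, so $s_n\to0$.

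The main obstacle is showing that the limit set has measure exactly $t$. Suppose $\mu(C)<t$. Then $\mu(B\setminus C)\ge\mu(B)-\mu(C)>0$, so by the small-subsets fact there is $E\subseteq B\setminus C$ with $0<\mu(E)\le t-\mu(C)$. For every $n$, this $E$ is admissible in the definition of $s_n$, because $E\subseteq B\setminus C_n$ and $\mu(C_n)+\mu(E)\le\mu(C)+\mu(E)\le t$. Hence $s_n\ge\mu(E)>0$ for all $n$, which contradicts $s_n\to0$. Therefore $\mu(C)=t$.

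This completes the plan. It uses only countable additivity and the definition of an atom, in line with the reference to \cite[\S215]{Fremlin}.
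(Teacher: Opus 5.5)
Your proposal is correct and follows the paper's proof essentially step for step. Both use the same halving argument for small subsets, the same greedy exhaustion with a supremum and a half-optimal choice, and the same contradiction via a small admissible set in $B\setminus C$; your induction on $m$ is the paper's successive removal of pieces of measure $\mu(A)/m$, and your contradiction ($s_n\to0$ against $s_n\ge\mu(E)$) is the same estimate as the paper's direct clash with $\sum_n\mu(C_n)\le\mu(A)$.
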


\begin{proof}
We first show that, for $0\leq t\leq\mu(A)$, some measurable $B\subseteq A$
has measure $t$. Only the case $0<t<\mu(A)$ requires proof.

Every positive-measure set $D$ contains subsets of arbitrarily small positive
measure. Since $D$ is not an atom, it can be divided into two sets of positive
measure; the smaller has measure at most $\mu(D)/2$. Iteration proves the
claim.

Set $B_0=\varnothing$. Given $B_n\subseteq A$ with $\mu(B_n)\leq t$, let
\[
  \varepsilon_n=
  \sup\{\mu(C):C\subseteq A\setminus B_n,\ 
                    \mu(B_n)+\mu(C)\leq t\}.
\]
Choose an admissible $C_n$ with $\mu(C_n)\geq\varepsilon_n/2$, taking
$C_n=\varnothing$ when $\varepsilon_n=0$, and put
$B_{n+1}=B_n\sqcup C_n$. The sets $B_n$ increase, so
$B=\bigcup_nB_n$ has measure at most $t$.

If $\mu(B)<t$, choose
$C\subseteq A\setminus B$ with $0<\mu(C)\leq t-\mu(B)$. This $C$ is
admissible in the definition of every $\varepsilon_n$. Hence
$\mu(C_n)\geq\mu(C)/2$ for all $n$, contrary to
$\sum_n\mu(C_n)\leq\mu(A)$. Thus $\mu(B)=t$.

Now remove from $A$ successive sets of measure $\mu(A)/m$. The last remainder
has the same measure.
\end{proof}

Let
\begin{equation}\label{eq:simple-f}
  f=\sum_{i=1}^n a_i\one_{A_i},
  \qquad 0\leq a_i\leq1,
\end{equation}
where the $A_i$ are pairwise disjoint. Fix $m\in\N$ and use
Lemma~\ref{lem:splitting} to write
\begin{equation}\label{eq:block-split}
  A_i=\bigsqcup_{s=1}^mA_{i,s},
  \qquad
  \mu(A_{i,s})=\frac{\mu(A_i)}m.
\end{equation}
Choose independent Bernoulli variables $\xi_{i,s}$ with
$\mathbb P(\xi_{i,s}=1)=a_i$, and set
\begin{equation}\label{eq:random-U}
  U=\bigcup_{\xi_{i,s}=1}A_{i,s}.
\end{equation}
The set $U$ takes finitely many values, so the expectations below are finite
sums.

\begin{proposition}[Rounding identity]\label{prop:rounding}
With the notation above,
\begin{equation}\label{eq:rounding-identity}
  \E q(\one_U)=q(f)+\Delta_m,
  \qquad
  \Delta_m=\sum_{i=1}^n a_i(1-a_i)
      \sum_{s=1}^mK(A_{i,s},A_{i,s}),
\end{equation}
and
\begin{equation}\label{eq:error-bound}
  |\Delta_m|
  \leq \frac{\norminf{K}}m
         \sum_{i=1}^n a_i(1-a_i)\mu(A_i)^2
  \leq \frac{\norminf{K}}{4m}
         \sum_{i=1}^n\mu(A_i)^2
  \leq \frac{\norminf{K}}{4m}.
\end{equation}
\end{proposition}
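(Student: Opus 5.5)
We expand $\one_U$ in the block basis and compute the expectation term by term. Since the $A_{i,s}$ are pairwise disjoint, $\one_U=\sum_{i,s}\xi_{i,s}\one_{A_{i,s}}$, and the block form \eqref{eq:block-form} gives
\[
  q(\one_U)=\sum_{(i,s),(j,t)}\xi_{i,s}\xi_{j,t}\,K(A_{i,s},A_{j,t}).
\]
This is a finite sum, so we may take expectations termwise. For $(i,s)\neq(j,t)$, independence gives $\E\,\xi_{i,s}\xi_{j,t}=a_ia_j$. On the diagonal, $\xi_{i,s}^2=\xi_{i,s}$, so the expectation is $a_i=a_i^2+a_i(1-a_i)$. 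Hence
\[
  \E q(\one_U)=\sum_{(i,s),(j,t)}a_ia_j\,K(A_{i,s},A_{j,t})+\sum_{i,s}a_i(1-a_i)K(A_{i,s},A_{i,s}).
\]
In the first sum, additivity of $K(\cdot,\cdot)$ in each argument over the partition \eqref{eq:block-split} gives $\sum_{s,t}K(A_{i,s},A_{j,t})=K(A_i,A_j)$. Applying \eqref{eq:block-form} to $f$ then identifies the first sum as $q(f)$. This proves \eqref{eq:rounding-identity}.

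For the error bound we use $|K(B,B)|\le\norminf{K}\mu(B)^2$ together with $\mu(A_{i,s})=\mu(A_i)/m$ from \eqref{eq:block-split}. These give
\[
  \sum_{s=1}^m|K(A_{i,s},A_{i,s})|\le m\,\norminf{K}\,\frac{\mu(A_i)^2}{m^2}=\frac{\norminf{K}}{m}\,\mu(A_i)^2,
\]
which is the first inequality. The second inequality follows from $a(1-a)\le1/4$ for $a\in[0,1]$. The third uses disjointness of the $A_i$:
\[
  \sum_i\mu(A_i)^2\le\Bigl(\sum_i\mu(A_i)\Bigr)^2\le\mu(\Omega)^2=1.
\]

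The argument is routine. The only points needing care are the diagonal bookkeeping, namely $\xi^2=\xi$ being what produces exactly the $a_i(1-a_i)$ correction, and checking that the off-diagonal sum including the $i=j$, $s\neq t$ terms reassembles into $q(f)$. I would handle the latter by writing the full double sum with $a_ia_j$ weights, which recovers $q(f)$ via additivity, and treating the diagonal discrepancy $a_i-a_i^2$ separately as above.
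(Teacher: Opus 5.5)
Your proposal is correct and follows the paper's proof essentially step for step. You use the same block expansion of $\one_U$ and the same diagonal decomposition $a_i=a_i^2+a_i(1-a_i)$, with finite additivity reassembling the $a_ia_j$-weighted sum into $q(f)$. The error bound is the same three-step chain via $\mu(A_{i,s})=\mu(A_i)/m$, $a(1-a)\le 1/4$, and $\sum_i\mu(A_i)^2\le 1$.
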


\begin{proof}
The pieces in \eqref{eq:block-split} are disjoint, and therefore
\[
  \one_U=\sum_{i=1}^n\sum_{s=1}^m\xi_{i,s}\one_{A_{i,s}}.
\]
Equation \eqref{eq:block-form} gives
\[
  q(\one_U)=
  \sum_{(i,s),(j,t)}
  \xi_{i,s}\xi_{j,t}K(A_{i,s},A_{j,t}).
\]
For $(i,s)\neq(j,t)$, independence gives
$\E(\xi_{i,s}\xi_{j,t})=a_i a_j$. On the diagonal,
$\E(\xi_{i,s}^2)=a_i=a_i^2+a_i(1-a_i)$. Taking expectations and then using
finite additivity in both arguments yields
\[
\begin{split}
  \E q(\one_U)
  &=\sum_{(i,s),(j,t)}a_i a_jK(A_{i,s},A_{j,t})+\Delta_m\\
  &=\sum_{i,j}a_i a_jK(A_i,A_j)+\Delta_m
   =q(f)+\Delta_m.
\end{split}
\]
This proves \eqref{eq:rounding-identity}.

For each $i$ and $s$,
\[
  |K(A_{i,s},A_{i,s})|
  \leq\norminf{K}\mu(A_{i,s})^2
  =\frac{\norminf{K}}{m^2}\mu(A_i)^2.
\]
Summing over $s$ proves the first inequality in \eqref{eq:error-bound}. The
second uses $a_i(1-a_i)\leq1/4$, and the third uses
\[
  \sum_i\mu(A_i)^2
  \leq\left(\sum_i\mu(A_i)\right)^2
  \leq1.
\]
\end{proof}

\begin{corollary}\label{cor:simple}
Under the hypotheses of Theorem~\ref{thm:kernel}, $q(f)\geq0$ for every simple
measurable $f\colon\Omega\to[0,1]$.
\end{corollary}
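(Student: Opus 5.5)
We write the simple function $f$ in the form \eqref{eq:simple-f}. Any simple measurable $f$ with values in $[0,1]$ takes finitely many values $a_1,\dots,a_n\in[0,1]$, and its level sets $A_i=f^{-1}(a_i)$ are pairwise disjoint and measurable. This gives the representation used in Proposition~\ref{prop:rounding}; zero values may simply be dropped or kept, since they contribute nothing. For each $m\in\N$, Lemma~\ref{lem:splitting} applies because $\mu$ is atomless. It yields the equal-measure partition \eqref{eq:block-split}, and hence the random set $U$ of \eqref{eq:random-U}.

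Every realization of $U$ is a finite union of the measurable sets $A_{i,s}$, so $U\in\Sigma$. The hypothesis of Theorem~\ref{thm:kernel} therefore gives $q(\one_U)\geq0$ pointwise in the randomness. Because $U$ takes finitely many values, $\E q(\one_U)$ is a finite convex combination of nonnegative numbers, so $\E q(\one_U)\geq0$. By the rounding identity \eqref{eq:rounding-identity} and the bound \eqref{eq:error-bound},
\[
  q(f)=\E q(\one_U)-\Delta_m\geq -|\Delta_m|\geq -\frac{\norminf{K}}{4m}.
\]

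Since $q(f)$ does not depend on $m$, while the partition and the random set do, letting $m\to\infty$ gives $q(f)\geq0$; the assumption $K\in L^\infty(\Omega^2)$ keeps $\norminf{K}$ finite. No step is a genuine obstacle. The only point needing care is that the hypothesis is applied to each realization of $U$, rather than to some averaged object, and this is legitimate because each realization is measurable.
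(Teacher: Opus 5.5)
Your proof is correct and follows the paper's argument exactly: you apply the hypothesis to each measurable realization of the rounded set $U$, take expectations, invoke the rounding identity together with $|\Delta_m|\leq\norminf{K}/(4m)$, and let $m\to\infty$. Your added remarks on the level-set representation and on the expectation being a finite convex combination just make explicit what the paper leaves implicit.
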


\begin{proof}
Every value of $U$ is measurable, so $q(\one_U)\geq0$ for every outcome.
Proposition~\ref{prop:rounding} gives
\[
  q(f)=\E q(\one_U)-\Delta_m
  \geq-|\Delta_m|
  \geq-\frac{\norminf{K}}{4m}.
\]
This holds for every $m\in\N$; hence $q(f)\geq0$.
\end{proof}

\section{Passage to general functions}

\begin{lemma}[$L^1$ estimate]\label{lem:continuity}
If $f,g\colon\Omega\to[0,1]$ are measurable, then
\begin{equation}\label{eq:L1-bound}
  |q(f)-q(g)|
  \leq\norminf{K}\bigl(\normone{f}+\normone{g}\bigr)\normone{f-g}
  \leq2\norminf{K}\normone{f-g}.
\end{equation}
\end{lemma}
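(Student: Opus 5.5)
The plan is to write the difference of the two quadratic forms as a sum of two bilinear terms and bound each one by Hölder's inequality. For bounded symmetric $K$, define the bilinear form
\[
  b(f,g):=\iint_{\Omega^2}f(x)K(x,y)g(y)\dd\mu(x)\dd\mu(y),
\]
so that $q(f)=b(f,f)$. Symmetry of $K$ gives $b(f,g)=b(g,f)$. Expanding bilinearly, we get the polarization-type identity
\[
  q(f)-q(g)=b(f,f)-b(g,g)=b(f,f-g)+b(f-g,g).
\]
This is legitimate because all the functions involved are bounded and $K\in L^\infty(\Omega^2)$, so every integrand is integrable on the probability space $\Omega^2$. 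Fubini then justifies splitting the integrals.

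Next we bound each term. For bounded measurable $u,v$ we have
\[
  |b(u,v)|\leq\iint|u(x)|\,|K(x,y)|\,|v(y)|\dd\mu(x)\dd\mu(y)\leq\norminf{K}\normone{u}\normone{v},
\]
where the product integral factors by Tonelli. Applying this with $(u,v)=(f,f-g)$ and with $(u,v)=(f-g,g)$, and adding, gives
\[
  |q(f)-q(g)|\leq\norminf{K}\bigl(\normone{f}+\normone{g}\bigr)\normone{f-g},
\]
which is the first inequality in \eqref{eq:L1-bound}. For the second inequality, note that $0\leq f,g\leq1$ on a probability space, so $\normone{f}\leq1$ and $\normone{g}\leq1$.

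There is no real obstacle here. The one point to check is that $\norminf{K}$ refers to the essential supremum on $\Omega^2$ with respect to $\mu\otimes\mu$. That is the correct measure for the Tonelli bound, since null sets of the product measure do not affect the integral.
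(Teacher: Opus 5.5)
Your proof is correct and is essentially the paper's argument: the paper uses the same bilinear form $b$, the same two-term telescoping (in the mirror order $b(f-g,f)+b(g,f-g)$), and the same bound $|b(h,k)|\leq\norminf{K}\normone{h}\normone{k}$. Your telescoping identity does not actually use the symmetry of $K$, so that remark is harmless but unnecessary.
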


\begin{proof}
For
\[
  b(h,k)=\iint h(x)K(x,y)k(y)\dd\mu(x)\dd\mu(y),
\]
we have
\[
  q(f)-q(g)=b(f-g,f)+b(g,f-g).
\]
Since
\[
  |b(h,k)|\leq\norminf{K}\normone{h}\normone{k},
\]
the first inequality in \eqref{eq:L1-bound} follows. The second follows from
$\normone{f},\normone{g}\leq1$.
\end{proof}

\begin{proof}[Proof of Theorem~\ref{thm:kernel}]
Given measurable $f\colon\Omega\to[0,1]$, define the simple functions
\[
  f_N=2^{-N}\lfloor2^Nf\rfloor\qquad(N\in\N).
\]
Then $0\leq f_N\leq1$ and $\normone{f-f_N}\leq2^{-N}$. By
Corollary~\ref{cor:simple} and Lemma~\ref{lem:continuity},
\[
  q(f)\geq q(f_N)-2\norminf{K}\normone{f-f_N}
       \geq-2^{1-N}\norminf{K}.
\]
Let $N$ tend to infinity.
\end{proof}

\begin{proof}[Proof of Theorem~\ref{thm:main}]
Set $K=W-p$. If $\norminf{f}=0$, then \eqref{eq:fractional} is immediate.
Otherwise apply Theorem~\ref{thm:kernel} to $f/\norminf{f}$ and multiply by
$\norminf{f}^{2}$.
\end{proof}

The argument gives a finite-scale form of the theorem.

\begin{corollary}[Finite-scale estimate]\label{cor:quantitative}
Put $M=\max\{p,1-p\}$. If
$f=\sum_i a_i\one_{A_i}\colon\Omega\to[0,1]$ is simple and the $A_i$ are
disjoint, then for every $m\in\N$,
\begin{equation}\label{eq:quant-simple}
  \iint f(x)W(x,y)f(y)\dd\mu(x)\dd\mu(y)
  \geq p\normone{f}^{2}
      -\frac{M}{4m}\sum_i\mu(A_i)^2.
\end{equation}
For measurable $f\colon\Omega\to[0,1]$, put
$f_N=2^{-N}\lfloor2^Nf\rfloor$. Then for all $m,N\in\N$,
\begin{equation}\label{eq:quant-general}
  \iint f(x)W(x,y)f(y)\dd\mu(x)\dd\mu(y)
  \geq p\normone{f}^{2}
      -M\left(\frac1{4m}+2^{1-N}\right).
\end{equation}
\end{corollary}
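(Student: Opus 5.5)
The plan is to apply the rounding machinery with $K=W-p$ and translate through \eqref{eq:reduction}. By \eqref{eq:K-bound}, $\norminf{K}\leq M$. The hypothesis \eqref{eq:local-density} says $q(\one_U)\geq0$ for every $U\in\Sigma$, so $q$ of every realization of the random set $U$ in \eqref{eq:random-U} is nonnegative.

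For the simple case \eqref{eq:quant-simple}, I fix the representation $f=\sum_i a_i\one_{A_i}$ with disjoint $A_i$ and $0\leq a_i\leq1$. Next I split each $A_i$ into $m$ pieces by Lemma~\ref{lem:splitting}, so the construction preceding Proposition~\ref{prop:rounding} is available. Proposition~\ref{prop:rounding} then gives
\[
  q(f)=\E q(\one_U)-\Delta_m\geq-|\Delta_m|.
\]
I would use the middle bound in \eqref{eq:error-bound}, not the final one, so that $|\Delta_m|\leq \frac{M}{4m}\sum_i\mu(A_i)^2$. Substituting into \eqref{eq:reduction} yields \eqref{eq:quant-simple}. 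The argument needs no hypothesis on the $A_i$ beyond disjointness and measurability. If some $A_i$ is null, its pieces are null and it contributes nothing.

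For the general case \eqref{eq:quant-general}, set $f_N=2^{-N}\lfloor2^Nf\rfloor$. Its level sets $A_k=\{k2^{-N}\leq f<(k+1)2^{-N}\}$ for $0\leq k<2^N$, together with $\{f=1\}$, are disjoint and measurable, and $f_N$ takes values in $[0,1]$. Applying the simple case to $f_N$ and using $\sum_k\mu(A_k)^2\leq1$ gives $q(f_N)\geq-M/(4m)$. Since $0\leq f-f_N\leq 2^{-N}$ pointwise, Lemma~\ref{lem:continuity} gives
\[
  q(f)\geq q(f_N)-2M\,\normone{f-f_N}\geq q(f_N)-M\,2^{1-N}.
\]
Combining the two bounds and rewriting $q(f)$ via \eqref{eq:reduction} gives \eqref{eq:quant-general}.

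The main obstacle is only bookkeeping: each bound must be stated with $M$ rather than $\norminf{K}$, and the per-function sum $\sum_i\mu(A_i)^2$ must be kept in the simple estimate rather than replaced by $1$. There is no genuine difficulty; everything follows from Proposition~\ref{prop:rounding} and Lemma~\ref{lem:continuity}.
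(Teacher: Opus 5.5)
Your proposal is correct and follows the paper's proof step for step. For the simple case you use Proposition~\ref{prop:rounding} with the middle bound of \eqref{eq:error-bound} and $\norminf{K}\leq M$; for the general case you apply that estimate to $f_N$ with $\sum_i\mu(A_i)^2\leq1$, then Lemma~\ref{lem:continuity} with $\normone{f-f_N}\leq2^{-N}$, spelling out details the paper leaves implicit (the level sets of $f_N$ and $2M\cdot2^{-N}=M2^{1-N}$).
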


\begin{proof}
For simple $f$, combine Proposition~\ref{prop:rounding} with
$\E q(\one_U)\geq0$ and \eqref{eq:K-bound}. This gives
\eqref{eq:quant-simple}. Apply the same estimate to $f_N$, use
$\sum_i\mu(A_i)^2\leq1$, and then apply Lemma~\ref{lem:continuity} with
$\normone{f-f_N}\leq2^{-N}$ to obtain \eqref{eq:quant-general}.
\end{proof}

\section{The atomic obstruction}\label{sec:atoms}

The kernel lemma fails without atomlessness, even on three points.

\begin{proposition}\label{prop:atomic-example}
Let $\Omega=\{1,2,3\}$ carry the uniform probability measure, let
$p=\tfrac12$, and let
\[
  W=
  \begin{pmatrix}
    1&0&0\\
    0&1&\tfrac34\\
    0&\tfrac34&1
  \end{pmatrix}.
\]
Then \eqref{eq:local-density} holds for every $U\subseteq\Omega$, but
\eqref{eq:fractional} fails for $f=(1,\tfrac23,\tfrac23)$.
\end{proposition}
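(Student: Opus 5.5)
The plan is a direct finite check: every integral becomes a finite sum with weights $\mu(\{x\})=\tfrac13$. Both \eqref{eq:local-density} and \eqref{eq:fractional} have the factor $\tfrac19$ on each side, since $\normone{f}^2$ also carries $(\tfrac13)^2$. It is therefore enough to compare the unnormalised sums $\sum_{x,y}f(x)W(x,y)f(y)$ and $\tfrac12\bigl(\sum_x f(x)\bigr)^2$.

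For local density I would run through the seven nonempty subsets; $U=\varnothing$ is trivial. For a singleton the left side is $1$ and the right side is $\tfrac12$. For $\{1,2\}$ and $\{1,3\}$ the left side is $1+1+0=2$, and the right side is $\tfrac12\cdot4=2$, so equality holds. For $\{2,3\}$ the left side is $2+2\cdot\tfrac34=\tfrac72$, which exceeds $2$. For $\Omega$ the left side is $3+2\cdot\tfrac34=\tfrac92$, and the right side is $\tfrac12\cdot9=\tfrac92$, again with equality. So \eqref{eq:local-density} holds for every $U$, tightly at $\{1,2\}$, $\{1,3\}$ and $\Omega$.

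For the failure, take $f=(1,\tfrac23,\tfrac23)$. The diagonal contributes $1+\tfrac49+\tfrac49=\tfrac{17}9$. The only nonzero off-diagonal pair is $\{2,3\}$, which contributes $2\cdot\tfrac34\cdot\tfrac49=\tfrac23$. The quadratic form is therefore $\tfrac{23}9=\tfrac{46}{18}$. On the other side, $\sum_x f(x)=\tfrac73$, so $\tfrac12\bigl(\tfrac73\bigr)^2=\tfrac{49}{18}$. Since $\tfrac{46}{18}<\tfrac{49}{18}$, \eqref{eq:fractional} fails. After restoring the factor $\tfrac19$, the two sides are $\tfrac{23}{81}<\tfrac{49}{162}$.

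There is no real obstacle; the only care needed is the bookkeeping of the factor $\tfrac13$ per atom. It is also worth explaining why Theorem~\ref{thm:main} does not apply. Each atom cannot be split, so the rounding of Proposition~\ref{prop:rounding} is unavailable. The diagonal error $a_i(1-a_i)K(A_i,A_i)$ with $m=1$ does not vanish, and it is positive here because $K=W-\tfrac12$ equals $\tfrac12$ on the diagonal. This is exactly the gap that the fractional values $\tfrac23$ exploit.
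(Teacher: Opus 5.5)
Your proof is correct and follows the paper's approach: a direct finite check of all subsets $U$, followed by evaluation at $f=(1,\tfrac23,\tfrac23)$. The only difference is cosmetic. The paper combines both sides into the single form $h^{\mathsf T}(2W-J)h=18\,q(h)$, whereas you compare the two sides separately, and your gap $\tfrac{46}{18}-\tfrac{49}{18}=-\tfrac16$ matches the paper's $q(f)=-\tfrac1{54}$ after the factor $\tfrac19$.
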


\begin{proof}
Let $J$ be the all-ones matrix and put
\[
  M=2W-J=
  \begin{pmatrix}
    1&-1&-1\\
    -1&1&\tfrac12\\
    -1&\tfrac12&1
  \end{pmatrix}.
\]
For $K=W-1/2$, uniform measure gives
$18q(h)=h^{\mathsf T}Mh$. The values of
$\one_U^{\mathsf T}M\one_U$ are $0$ for
$U\in\{\varnothing,\{1,2\},\{1,3\},\Omega\}$, $1$ for the singletons, and
$3$ for $U=\{2,3\}$. Hence $q(\one_U)\geq0$ for every $U$. For
$f=(1,\tfrac23,\tfrac23)$, however, $f^{\mathsf T}Mf=-1/3$ and
$q(f)=-1/54$.
\end{proof}

\begin{remark}\label{rem:convexity}
Because $M_{ii}=1$, the function $a\mapsto a^{\mathsf T}Ma$ is convex in each
coordinate. All eight vertex values on $[0,1]^3$ are nonnegative, but the
value at $(1,\tfrac23,\tfrac23)$ is $-1/3$. Thus separate convexity does not
force a minimum to occur at a vertex.
\end{remark}

\begin{remark}\label{rem:failure}
In the singleton decomposition
$f=\one_{\{1\}}+\tfrac23\one_{\{2\}}+\tfrac23\one_{\{3\}}$, no block can be
divided into $m>1$ equal-measure pieces. At $m=1$ the correction in
\eqref{eq:rounding-identity} is $\Delta_1=2/81$, and
$\E q(\one_U)=q(f)+\Delta_1=1/162\geq0$ although $q(f)=-1/54$.
Atomlessness permits arbitrarily fine splitting and thereby forces
$\Delta_m\to0$.
\end{remark}

\end{document}